\newcommand{\N}{{\mathbb N}}
\newcommand{\R}{{\mathbb R}}
\newcommand{\T}{{\mathbb T}}
\newcommand{\Z}{{\mathbb Z}}
\newcommand{\ve}{\varepsilon}
\newcommand{\norm}[1]{\left\Vert #1\right\Vert}
\newcommand{\nnorm}[1]{\lvert\!|\!| #1|\!|\!\rvert}
\newcommand{\inv}{^{-1}}
\newtheorem{lemma}{Lemma}
\newtheorem{proposition}[lemma]{Proposition}
\newtheorem*{proposition*}{Proposition}
\newtheorem{theorem}[lemma]{Theorem}
\newtheorem*{theorem*}{Theorem}
\newtheorem{corollary}[lemma]{Corollary}
\newtheorem*{corollary*}{Corollary}
\theoremstyle{definition}
\newtheorem*{definition}{Definition}
\theoremstyle{remark}
\newtheorem*{remark}{Remark}
\theoremstyle{plain}
\begin{document}
\title[Bohr recurrence and density of non-lacunary semigroups]{ Bohr recurrence and density of non-lacunary semigroups of $\N$ }

\thanks{NF was supported  by the research grant  ELIDEK HFRI-NextGenerationEU-15689 and  BK was partially supported by the National Science Foundation grant DMS-2348315.}

\author{Nikos Frantzikinakis}
\address[Nikos Frantzikinakis]{University of Crete, Department of mathematics and applied mathematics, Voutes University Campus, Heraklion 71003, Greece} \email{frantzikinakis@gmail.com}
\author{Bernard Host}
\address[Bernard Host]{
LAMA, Universit\'e Gustave-Eiffel, CNRS, F-77447 Marne-la-Vall\'ee, France}
\email{bernard.host@univ-eiffel.fr}
\author{Bryna Kra}
\address[Bryna Kra]{Department of Mathematics, Northwestern University, Evanston, IL 60208, USA} \email{kra@math.northwestern.edu}

\begin{abstract}
A subset $R$ of integers is a set of Bohr recurrence if every rotation on $\T^d$ returns arbitrarily close to zero under some non-zero multiple of $R$. We show that the set $\{k!\, 2^m3^n\colon k,m,n\in \N\}$ is a set of Bohr recurrence. This is a particular case of a more general statement about images of such sets under any integer polynomial with zero constant term. We also show that if $P$ is a real polynomial with at least one non-constant irrational coefficient, then the set 
$\{P(2^m3^n)\colon m,n\in \N\}$ is dense in $\T$, thus providing a joint generalization of two well-known results, one of Furstenberg and one of Weyl. 
\end{abstract}

\subjclass[2020]{Primary: 37B20; Secondary: 37A44, 37C85, 11J71, 05B10.}

\keywords{Bohr recurrence, non-lacunary semigroups, Furstenberg's theorem, Weyl's theorem.}
\maketitle

\section{Introduction and main results}
\subsection{Notions of recurrence}
The recurrence properties of a dynamical system are a classical way to study the system's qualitative and quantitative properties. We focus on recurrence in a topological dynamical system $(X, T)$, meaning that $X$ is a compact metric space and $T\colon X\to X$ is a homeomorphism, and further assume that the system is minimal, meaning that no proper closed set in $X$ is $T$-invariant.  A set $R\subset\N$ is a set of \emph{(topological) recurrence} if for every minimal system $(X, T)$ and non-empty open set $U\subset X$, there is some $r\in R$ such that $U\cap T^{-r}U\neq\emptyset$. It follows quickly  from the compactness of $X$ that  $\N$ is a set of recurrence, and many sparser subsets have been shown to be sets of recurrence.  These include, for example, the set of differences of an infinite set (see~\cite{Furstenberg-book} for this example and others), the set $\{P(n):n\in\N\}$ where $P$ is any integer valued polynomial with no constant term (see~\cite{F2, sarkozy1}), and the set of shifted primes $\{p-1: p \text{ is a prime}\}$ (see~\cite{sarkozy2}).  It is also easy to check that parity obstructions give rise to examples, such as the odd numbers or any shift of the primes other than $\pm 1$, that cannot be sets of recurrence.  

A natural question is if restricting the class of topological systems changes the possible sets of recurrence.  The most basic nontrivial systems to consider are  rotations on the  $d$-dimensional torus $\T^d = \R^d/\Z^d$.  For 
 $x\in \R^d$, write $\norm{x}_{\T^d}:=d(x,\Z^d)$, and we define sets of recurrence for these systems. 
\begin{definition}
A subset $R$ of the integers is a \emph{set of Bohr recurrence} if for every integer $d\in \N$, all $\alpha_1,\dots,\alpha_d\in\R$, and every $\ve>0$, there exists non-zero $r\in R$ such that
$$
\norm{ r\alpha_j}_\T\leq \ve \quad \text{for }j=1,\dots,d. 
$$
\end{definition}
Equivalently, the set $R$ is a set of Bohr recurrence if for every integer $d\in \N$ and every $x\in\T^d$, the
point $0$ belongs to the closure of  the set $\{ rx\colon r\in R\setminus\{0\}\}$ in $\T^d$. 
By building an appropriate rotation, one can check that no lacunary set can be a set of Bohr recurrence, and therefore cannot be a set of recurrence. 
An important open problem,  popularized by Katznelson~\cite{K}, is  whether a set of Bohr recurrence is necessarily a set of topological recurrence.  See \cite{We, BG, HKM, GKR,Gr} for various equivalent formulations of this question and related results.

\subsection{Non-lacunary semigroups}
Our objective is to derive recurrence and density properties of sets generated by thin non-lacunary semigroups of the integers, a prototypical example being the set
$
\{ 2^m3^n \colon m,n\in\N \}. 
$
Again, parity reasons prevent this from being a set of Bohr recurrence, but after  removing this obstruction we are led to the well-known question in dynamics
 whether the set  
$$
\{ k!\,2^m3^n \colon k,m,n\in\N \}
$$
 is a set  of  topological recurrence (more generally one can ask if this is a set of measurable multiple recurrence). Although we are unable to answer these  questions,
we derive some  recurrence  results of intermediate strength, which follow from Furstenberg's Diophantine Theorem~(see Section~\ref{SS:DensityFW}), using elementary manipulations.

Two positive integers are \emph{multiplicatively independent} if their only common power is $1$. The first main goal of this note is to establish the following result:
\begin{theorem}
	\label{T:recurrence}
Let  $p,q\in \N$ be  multiplicatively independent integers. Then the set $\{ k!\,p^mq^n \colon k,m,n\in\N \}$ is a set of Bohr recurrence.
\end{theorem}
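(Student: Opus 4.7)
Given reals $\alpha_1,\dots,\alpha_d$ and $\ve>0$, we seek $k,m,n\in\N$ satisfying $\norm{k!\,p^mq^n\alpha_j}_\T<\ve$ for every $j$. The idea is to use the factorial $k!$ to absorb the rational parts of the $\alpha_j$, reducing the claim to a joint recurrence statement for $\{p^mq^n\}$ which we then prove by induction from the one-dimensional Furstenberg Diophantine theorem.

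Choose $\beta_1,\dots,\beta_r\in\R$ so that $\beta_1,\dots,\beta_r,1$ are $\Q$-linearly independent and their $\Q$-span contains each $\alpha_j$; write $\alpha_j=\sum_{i=1}^r c_{ji}\beta_i+q_j$ with $c_{ji},q_j\in\Q$. Take $k$ so large that $k!$ is a common multiple of all denominators appearing in the $c_{ji}$ and the $q_j$; then $k!\,\alpha_j\equiv\sum_i N_{ji}\beta_i\pmod\Z$ with $N_{ji}:=k!\,c_{ji}\in\Z$, so
\[
\norm{k!\,p^mq^n\alpha_j}_\T \;\leq\; \sum_{i=1}^r|N_{ji}|\cdot\norm{p^mq^n\beta_i}_\T.
\]
Hence it suffices to find a single pair $(m,n)$ making each $\norm{p^mq^n\beta_i}_\T$ arbitrarily small.

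The remaining assertion is a \emph{joint Furstenberg} statement: whenever $\beta_1,\dots,\beta_r,1$ are $\Q$-linearly independent, for every $\eta>0$ there exist $m,n\in\N$ with $\norm{p^mq^n\beta_i}_\T<\eta$ for all $i$. I plan to prove this by induction on $r$: the base $r=1$ is Furstenberg's Diophantine theorem, and for the inductive step I analyze the orbit closure $K\subset\T^r$ of $(\beta_1,\dots,\beta_r)$ under the diagonal action of $\{p^mq^n\}$. For every nonzero $\mathbf{a}\in\Z^r$ the real $\sum a_i\beta_i$ is irrational by hypothesis, so Furstenberg's theorem shows that the character $\chi_{\mathbf{a}}(x)=\sum a_ix_i$ maps $K$ densely onto $\T$. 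The main obstacle is to upgrade this character-level density to the existence of points of $K$ arbitrarily close to the origin. The plan here is to exploit forward-invariance of $K$ under the two commuting scalar maps $T_p,T_q$ (which act essentially independently thanks to the multiplicative independence of $p,q$): the character density forbids $K$ from lying inside any proper closed subgroup of $\T^r$, and combined with $T_p,T_q$-invariance this rigidity should force $0\in K$. Carrying out this last step using only one-dimensional Furstenberg as input -- that is, by the ``elementary manipulations'' promised in the introduction rather than by heavier equidistribution machinery -- is the delicate part of the proof.
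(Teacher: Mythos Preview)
Your reduction is sound, but the ``joint Furstenberg'' statement you reduce to is \emph{false} for $r\geq 4$, so the plan cannot be completed. Concretely, the paper itself (Section~\ref{S:NonBohr}, using a modification of Maucourant's example~\cite[Lemma~4.2]{M}) exhibits $x=(x_1,\dots,x_4)\in\T^4$ with $1,x_1,\dots,x_4$ rationally independent and an $\ve>0$ such that $\norm{2^m3^n x}_{\T^4}\geq\ve$ for \emph{every} $m,n\in\N$. Taking $\beta_i=x_i$ gives a direct counterexample to your claim that $0$ must lie in the diagonal $\{p^mq^n\}$-orbit closure of $(\beta_1,\dots,\beta_r)$ whenever $\beta_1,\dots,\beta_r,1$ are $\Q$-independent. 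Your heuristic that ``character density plus $T_p,T_q$-invariance forces $0\in K$'' therefore breaks down; the orbit closure $K$ can be a genuinely non-homogeneous object that surjects onto every one-dimensional quotient yet avoids a neighborhood of the origin.

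The underlying misstep is spending the factorial too early. You use $k!$ once, to clear the rational data in the $\alpha_j$, and are then left needing recurrence to $0$ for the irrational residue~--- which, as above, does not hold. The paper proceeds in the opposite order: it first studies the orbit closure $A\subset\T^d$ of $(\alpha_1,\dots,\alpha_d)$ under the full semigroup $\Sigma=\{p^mq^n\}$ and proves (Proposition~\ref{prop:periodic}) that $A$ contains some \emph{rational} point $y$, not necessarily $0$; only then is $k$ chosen so that $k!\,y=0$ in $\T^d$, which pulls $0$ into the closure of $\{k!\,s\,\alpha:s\in\Sigma\}$. The inductive proof of Proposition~\ref{prop:periodic} does use one-dimensional Furstenberg at each step, but the key device is different from yours: having found a rational last coordinate $a/m$ with $m$ coprime to $\Sigma$, one passes to the non-lacunary sub-semigroup $\Sigma_m=\Sigma\cap(m\Z+1)$, which fixes that coordinate and thereby reduces the dimension. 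No claim that the orbit approaches $0$ without the help of $k!$ is ever needed.
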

More generally,  it follows from our argument that $k!$ can be replaced by any sequence that contains multiples of every positive integer.

On the other hand, considering rational multiples of elements of this set shows that the multiplicative factor $k!$ cannot be removed, and, in fact, it is necessary for our argument to work even if  the real numbers $\alpha_1,\ldots, \alpha_d$ have no rational relations.
As shown in~\cite[Theorem~2]{M}, for $d\geq 4$, there exists $x\in \T^d$ with rationally independent coordinates for which the set   $\{2^m3^n x \colon m,n\in\N\}$ is not dense in $\T^d$.   Although  for fixed $d\in \N$ this particular obstruction 
can  be removed if
more powers are included in the set (see~\cite{Y} for a density statement  on $\T^2$ that uses three powers),  \cite[Theorem~2]{M} shows that the problem persists even when using $\ell$ powers instead of two and taking $d$ large enough;  in this case we have non-density in $\T^d$ for some  $x\in \T^d$ with rationally independent coordinates    whenever $d\geq 2\ell$. Furthermore, by slightly modifying the example in~\cite[Lemma~4.2]{M}, we obtain a point $x\in \T^4$ with rationally independent coordinates, such that for some  $\varepsilon>0$ we have  $\norm{2^m3^nx}_{\T^4}\geq \varepsilon$ for every $m,n\in \N$ (see Section~\ref{S:NonBohr}). 

We also remark that combining Theorem~\ref{T:recurrence} with~\cite[Theorem~4.1]{HKM} and~\cite[Theorem~A]{GKR}, we deduce that the sets in Theorem~\ref{T:recurrence} (and in Theorem~\ref{T:polynomialrecurrence} below) are also good for topological recurrence for all  nilsystems and for a large class of skew product  systems.

 In the proof of Theorem~\ref{T:recurrence}, we consider a more general situation where sets of the form $\{p^mq^n\colon m,n\in \N\}$ are replaced by more general non-lacunary semigroups of $\N$.
\begin{definition}
A \emph{semigroup of $\N$}  is a  sub-semigroup of $(\N,\cdot)$ and it is \emph{non-lacunary} if it contains two multiplicatively independent integers.
\end{definition}
Equivalently, a semigroup is non-lacunary if it is not contained  in the set of powers of a single integer.
It is shown in~\cite[Lemma~IV.1]{F} that a semigroup $\Sigma=\{s_1<s_2<\cdots \}$ of $\N$ is non-lacunary if and only if $s_{n+1}/s_n\to 1$ as $n\to \infty$.

We say that $P$ is an \emph{integer polynomial} if it  has integer coefficients. We prove the following result, generalizing Theorem~\ref{T:recurrence}.
\begin{theorem}
	\label{T:polynomialrecurrence}
	Let $\Sigma$ be a non-lacunary semigroup of  $\N$, $K\subset \N$ be a set that contains multiples of every positive integer,
and $P$ be a non-constant  integer polynomial with $P(0)=0$. Then the set $\{ P(k s) \colon k\in K, s\in \Sigma\bigr\}$ is a set of  Bohr recurrence.
\end{theorem}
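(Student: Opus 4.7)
The plan combines Dirichlet's simultaneous approximation, the divisibility hypothesis on $K$, and Furstenberg's Diophantine theorem in the non-lacunary semigroup $\Sigma$. Roughly, Dirichlet and $K$ handle the rational content of $(\alpha_1,\dots,\alpha_d)$, while Furstenberg handles the residual irrational content.

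Given $\alpha_1,\dots,\alpha_d\in\R$ and $\ve>0$, first apply simultaneous Dirichlet: for any $\eta>0$, find $q\in\N$ with $q\alpha_j=p_j+\delta_j$, $p_j\in\Z$, $|\delta_j|<\eta$. By the hypothesis on $K$, pick $k\in K$ with $q\mid k$; writing $k=Lq$ and using $P(0)=0$, each monomial of $P(ks)$ is divisible by $k$ and hence by $q$, so $N(s):=P(ks)/q$ is an integer. A direct expansion then gives
\[
P(ks)\alpha_j\equiv N(s)\delta_j\pmod 1.
\]
Factoring $P(y)=yQ(y)$ with $Q\in\Z[y]$ of degree $r-1=\deg P-1$, one has $N(s)\delta_j=LQ(ks)\cdot s\delta_j$, whence
\[
\norm{P(ks)\alpha_j}_\T\le |LQ(ks)|\cdot\norm{s\delta_j}_\T
\]
as long as the right side is less than $1/2$. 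Since $|LQ(ks)|\le Ck^{r-1}s^{r-1}$ with $r=\deg P$, the task reduces to finding $s\in\Sigma$ making all the $\norm{s\delta_j}_\T$ simultaneously small compared to $\ve/(Ck^{r-1}s^{r-1})$.

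For this simultaneity I would apply Furstenberg's Diophantine theorem one coordinate at a time. Since $\Sigma$ is non-lacunary, $\{s\delta_j:s\in\Sigma\}$ is dense in $\T$ for each irrational $\delta_j$; pick $s^{(1)}\in\Sigma$ with $\norm{s^{(1)}\delta_1}_\T$ very small. The sub-semigroup $s^{(1)}\Sigma$ remains non-lacunary, so Furstenberg yields $s^{(2)}\in\Sigma$ with $\norm{s^{(1)}s^{(2)}\delta_2}_\T$ very small, while the elementary bound $\norm{s^{(1)}s^{(2)}\delta_1}_\T\le s^{(2)}\norm{s^{(1)}\delta_1}_\T$ preserves the first coordinate at the cost of a factor $s^{(2)}$. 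Iterating through all $d$ coordinates produces a single $s=s^{(1)}\cdots s^{(d)}\in\Sigma$ that controls every $\norm{s\delta_j}_\T$ simultaneously.

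The hardest step is the parameter balancing, since the final $s$ is not bounded a priori by Furstenberg's theorem while the required bound on $\norm{s\delta_j}_\T$ depends on $s$ itself. This is handled by choosing, at each Furstenberg step, a smallness much stronger than ultimately needed so as to absorb the multiplicative loss from subsequent steps, and by taking the Dirichlet parameter $\eta$ small enough at the outset. The case $P(ks)=0$, which occurs only for finitely many $(k,s)$, is avoided by a small perturbation. Modulo this book-keeping, all other manipulations are elementary, in accordance with the paper's claim that the result follows from Furstenberg's Diophantine theorem through elementary manipulations.
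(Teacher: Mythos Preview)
Your parameter-balancing step is not book-keeping; it is a genuine gap, and it fails in two independent ways. First, in the coordinate-by-coordinate iteration, after picking $s^{(1)}$ with $\norm{s^{(1)}\delta_1}_\T\le\epsilon_1$ and then $s^{(2)}\in\Sigma$ for the second coordinate, you only have $\norm{s^{(1)}s^{(2)}\delta_1}_\T\le s^{(2)}\epsilon_1$, and Furstenberg's theorem gives \emph{no bound whatsoever} on $s^{(2)}$. Since $s^{(2)}$ is chosen after $\epsilon_1$, no a priori choice of $\epsilon_1$ can absorb it; reversing the order of the coordinates runs into the symmetric obstruction. Your method uses nothing about the $\delta_j$ beyond irrationality, so if it worked it would show that $0$ lies in the closure of $\{(s\delta_1,\dots,s\delta_d):s\in\Sigma\}$ for \emph{every} irrational tuple---but the paper itself records (Section~\ref{S:NonBohr}) that this fails already for $d=4$. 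Second, for $\deg P=r\ge2$ you need $\norm{s\delta_j}_\T\le\ve/(Ck^{r-1}s^{r-1})$, a bound depending on the very $s$ you seek; Furstenberg gives smallness for some $s$ but no rate against $s^{r-1}$, and the alternative of using $|\delta_j|<\eta$ directly would require $L^{r}q^{r-1}\eta$ to be small, where $L,q$ are uncontrolled functions of $\eta$ (for $K=\{n!\}$ and $q$ prime one has $L\ge(q-1)!$).

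The paper's argument avoids both obstacles by lifting to $\T^{rd}$: the closure of $\{(sc_1x,\dots,s^rc_rx):s\in\Sigma\}$ is invariant under $\{(s,\dots,s,\dots,s^r,\dots,s^r):s\in\Sigma\}$, and Proposition~\ref{prop:periodic} locates a genuinely \emph{rational} point in this orbit closure. A suitable $k\in K$ then kills that point exactly, so no loss in $s$ ever enters. The induction behind Proposition~\ref{prop:periodic} does apply one-dimensional Furstenberg one coordinate at a time, but the decisive device---passing to the sub-semigroup $\Sigma_m=\Sigma\cap(m\Z+1)$ that \emph{fixes} the rational coordinate just found---freezes that coordinate permanently and is precisely what breaks the circularity; your scheme has no analogue of this step.
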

In particular, if $p,q\in \N$ are multiplicatively independent and $P$ is a  non-constant integer polynomial with  zero constant term, then   the set  $$
\{ P(k!\,p^mq^n) \colon k,m,n\in\N\bigr\}
$$ is a set of Bohr recurrence.
 By taking $P(n)=n$, Theorem~\ref{T:recurrence} follows.

 The
argument used to prove Theorem~\ref{T:polynomialrecurrence} can also be used, essentially without change,  to show that if $P_1,\ldots, P_\ell$ are integer polynomials with zero constant terms, then for every $\alpha_1,\ldots, \alpha_\ell\in \R$ and $\ve>0$,  there exist $k\in K$  and  $s\in \Sigma$ such that $\norm{P_j(k\cdot s)\, \alpha_j}_\T\leq \ve$ for $j=1,\ldots,\ell$.

We prove  Theorem~\ref{T:polynomialrecurrence} in Section~\ref{S:ProofBohr}. The  main   ingredient is Proposition~\ref{prop:periodic}, which is of independent interest and  shows  the existence of a point with rational coordinates in the orbit closure of non-lacunary semigroup actions of $\T^d$. A special case of this result  is that if $\Sigma$ is a non-lacunary semigroup of $\N$ and $A$ is a non-empty closed subset of $\T^d$, which  is invariant under coordinate-wise multiplication by 
elements of the form $(s,\ldots, s)$ for every $s\in \Sigma$, then $A$ contains a point with rational coordinates. Theorem~\ref{T:recurrence} is a direct consequence of this result.

\subsection{A Weyl-type extension of Furstenberg's result}\label{SS:DensityFW}
We recall a celebrated theorem of Furstenberg~\cite[Theorem~IV.1]{F} (see also \cite{Bos} for an elementary proof):  
\begin{theorem*}[Furstenberg]
	\label{th:F}
	Let $\Sigma$ be a non-lacunary semigroup of $\N$ and $\alpha$ be irrational. Then the set 
		$
	\{s\alpha \colon s\in \Sigma\}
	$
	is dense in  $\T$.  
\end{theorem*}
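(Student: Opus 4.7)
The plan is to study the orbit closure $A := \overline{\{s\alpha \bmod 1 : s \in \Sigma\}}$, a nonempty closed subset of $\T$ that is invariant under multiplication by every $s \in \Sigma$ (by the semigroup property), and to show $A = \T$. First I would observe that $A$ is infinite: if it were finite, the infinite-to-finite map $s \mapsto s\alpha \bmod 1$ from $\Sigma$ to $A$ would have a collision $s_1\alpha \equiv s_2 \alpha \pmod{1}$ with $s_1 \neq s_2$, yielding $\alpha = n/(s_1 - s_2)$ for some integer $n$ and contradicting the irrationality of $\alpha$.

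Since $\Sigma$ is non-lacunary it contains two multiplicatively independent integers $p, q$ (by the definition of non-lacunarity in the paper, equivalent via \cite[Lemma~IV.1]{F} to the condition $s_{n+1}/s_n \to 1$). Consequently $A$ is invariant under both commuting transformations $\times p$ and $\times q$ on $\T$. The key remaining task is to prove that any infinite closed $\times p, \times q$-invariant subset of $\T$ is all of $\T$: this is the classical closed-set form of Furstenberg's theorem, and applied to $A$ it gives $A = \T$ at once.

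The main obstacle is this closed-set statement, which I would establish topologically following Boshernitzan \cite{Bos}. Suppose for contradiction $A \neq \T$; let $U := \T \setminus A$, a nonempty open set decomposing as a disjoint union of maximal open arcs, and set $L := \sup_I |I| > 0$, attained by compactness at some arc $I_0 = (a,b)$ with $a, b \in A$. The invariance $A \subset s^{-1}(A)$ gives $s^{-1}(U) \subset U$ for every $s \in \Sigma$, so $s^{-1}(I_0)$ is a disjoint union of $s$ open arcs of length $L/s$ contained in $U$. Using the multiplicative independence of $p$ and $q$ to position elements $s = p^m q^n \in \Sigma$ at appropriate scales (so that $sL$ is close to a positive integer from above), one produces points of $A$ within arbitrarily small distance on $\T$; pushing these close pairs through further multiplications by elements of $\Sigma$ then spreads them across $\T$, contradicting the existence of a maximal gap of length $L$. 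Carrying out this quantitative rigidity step is the delicate content of \cite{Bos}.
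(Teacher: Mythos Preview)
The paper does not prove this statement: it is quoted as Furstenberg's theorem with a citation to \cite[Theorem~IV.1]{F} and to Boshernitzan's elementary proof \cite{Bos}, and is used throughout as a black box. So there is no ``paper's own proof'' to compare against; your proposal goes beyond what the paper does by actually sketching an argument.

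Your outline is sound. The reduction is clean: the orbit closure $A$ is indeed closed, $\Sigma$-invariant (by the semigroup property and continuity of $\times s$), and infinite by the irrationality argument you give; passing from $\Sigma$-invariance to $\times p,\times q$-invariance for two multiplicatively independent $p,q\in\Sigma$ is immediate. The gap argument you set up---that $s^{-1}(U)\subset U$ and that the preimage of the maximal arc $I_0$ splits into $s$ arcs of length $L/s$---is correct. The only soft spot is the final paragraph, which you yourself flag: the mechanism by which multiplicative independence of $p,q$ lets one place $sL$ arbitrarily close to an integer from above, and then propagate the resulting short gaps to a contradiction, is asserted rather than carried out. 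As written this last step is a citation to \cite{Bos} dressed up as a sketch; that is perfectly acceptable given that the paper itself simply cites the result, but if you intend this as a self-contained proof you would need to fill in that rigidity argument (for instance via the density of $\{m\log p + n\log q : m,n\in\Z\}$ in $\R$, which is where multiplicative independence enters quantitatively).
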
 
This immediately implies that a closed $\Sigma$-invariant subset of $\T$ is either dense or finite. In the latter case, it consists entirely of rational points. 

Another celebrated theorem, this time  of Weyl~\cite{W16}, is as follows: 
\begin{theorem*}[Weyl]
	\label{th:W}
 Let $P\in \R[t]$ be a   polynomial with at least one  non-constant irrational coefficient.   Then the set 
$
\{P(n) \colon n\in \N\}
$
is dense in $\T$. 
\end{theorem*}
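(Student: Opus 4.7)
The plan is to establish the stronger fact that $\{P(n) \bmod 1\}_{n\in\N}$ is equidistributed in $\T$, from which density is immediate. By Weyl's equidistribution criterion, this reduces to showing
\[
\frac{1}{N}\sum_{n=1}^{N} e^{2\pi i h P(n)} \longrightarrow 0 \qquad \text{as } N\to\infty
\]
for every non-zero integer $h$, a statement I would prove by induction on the degree $d$ of $P$. For the base case $d=1$, the hypothesis forces the linear coefficient $\alpha$ to be irrational, and a direct geometric series estimate gives the bound $2/(N\,|1 - e^{2\pi i h\alpha}|)$, which tends to $0$.

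For the inductive step, first suppose the leading coefficient $a_d$ of $P$ is irrational. I would apply van der Corput's inequality to $u_n = e^{2\pi i h P(n)}$, reducing the problem to proving decay of the averages $\frac{1}{N}\sum_n u_{n+r}\overline{u_n}$ for each $r\geq 1$. These are driven by the polynomial $P(n+r)-P(n)$, which has degree $d-1$ in $n$ with leading coefficient $d\,r\,a_d$, still irrational; the inductive hypothesis therefore applies, and van der Corput's inequality concludes this case.

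The main obstacle is the case in which $a_d$ is rational while some other non-constant coefficient is irrational: here van der Corput does not help, since differencing preserves the rationality of the top coefficient. My plan is to pick the largest index $j\geq 1$ for which $a_j$ is irrational (so $a_{j+1},\ldots,a_d$ are all rational), choose a positive integer $q$ clearing the denominators of $a_{j+1},\ldots,a_d$, and split $n = qm + r$ over residues $r\in\{0,1,\ldots,q-1\}$. Expanding in $m$, every monomial $a_k(qm+r)^k$ with $k>j$ becomes an integer polynomial in $m$, hence contributes only a constant (depending on $r$) modulo~$\Z$, whereas the term $a_j(qm+r)^j$ has leading $m$-coefficient $a_j q^j$, which is irrational. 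Thus, modulo $\Z$, $P(qm+r)$ reduces to a polynomial in $m$ of effective degree $j<d$ with an irrational non-constant coefficient; the inductive hypothesis yields decay along each arithmetic progression, and averaging over $r$ gives the full claim.
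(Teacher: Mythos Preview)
Your argument is the standard Weyl--van der Corput proof and is essentially correct. Note, however, that the paper does not give its own proof of this statement: Weyl's theorem is quoted as a classical result (with a citation to~\cite{W16}) and is then invoked as a black box in the proof of Lemma~\ref{L:tdense}. The paper's Theorem~\ref{T:dense} does specialize to Weyl's theorem upon taking $\Sigma=\N$, but the proof of Theorem~\ref{T:dense} goes through Lemma~\ref{L:tdense}, which itself appeals to Weyl's equidistribution theorem, so this does not furnish an independent argument. In short, there is no ``paper's proof'' to compare against here; your route via exponential sums, van der Corput differencing, and reduction along arithmetic progressions is exactly the classical one.

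One small imprecision worth tightening: for $k>j$ the expression $a_k(qm+r)^k$ is not literally an integer polynomial in $m$, since its constant term $a_k r^k$ need not be an integer. What is true---and what you actually use---is that every non-constant coefficient in $m$ is an integer (because $q$ clears the denominator of $a_k$ and appears to at least the first power), so modulo $\Z$ the contribution is the constant $a_k r^k$, which is absorbed harmlessly into the phase. With that clarification the inductive step goes through as you describe.
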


We prove the following common generalization of these two density statements:  
\begin{theorem}
	\label{T:dense}
	Let   $\Sigma$ be a non-lacunary semigroup and $P\in \R[t]$ be a   polynomial with at least one   non-constant irrational coefficient.   Then the set 
	$$
	\{P(s) \colon s\in \Sigma\}
	$$
	is dense in $\T$. 
\end{theorem}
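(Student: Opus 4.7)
The plan is to argue by induction on the degree $d$ of $P$. The base case $d=1$ is immediate from the quoted theorem of Furstenberg: if $P(t)=\alpha_1 t+\alpha_0$ with $\alpha_1\notin\Q$, then $\{P(s):s\in\Sigma\}$ is a translate of $\alpha_1\Sigma$, which is dense in $\T$.

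For the inductive step I would first reduce to the case where all non-constant coefficients of $P$ are irrational. Write $P=R+S$ with $R$ collecting the monomials having rational coefficients and $S$ the monomials having irrational coefficients; the value $R(s)\bmod 1$ depends only on $s$ modulo the common denominator $N$ of the coefficients of $R$. I would restrict $s$ to a sub-semigroup $\Sigma^\ast\subseteq\Sigma$ on which $R(s)\bmod 1$ is constant---concretely, an idempotent residue class modulo $N$---whose non-lacunarity would be verified by exhibiting suitable powers of two multiplicatively independent elements of $\Sigma$ that land in that class and remain multiplicatively independent. This reduces the problem to showing density of $\{S(s):s\in\Sigma^\ast\}$, where $S$ has only irrational non-constant coefficients. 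If $S$ is a single monomial $\alpha_j t^j$, Furstenberg applied to $\alpha_j$ and the non-lacunary semigroup $(\Sigma^\ast)^j=\{s^j:s\in\Sigma^\ast\}$ gives density at once. Otherwise, setting $j_0=\deg S$, for each $u\in\Sigma^\ast$ with $u\geq 2$ I consider the multiplicative difference
\[
Q_u(t)\;=\;S(ut)-u^{j_0}S(t)\;=\;\sum_{k<j_0}\alpha_k(u^k-u^{j_0})\,t^k,
\]
whose degree is strictly less than $j_0\leq d$ and which retains an irrational non-constant coefficient since, by assumption, $S$ has an irrational $\alpha_k$ at some $k<j_0$. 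By the inductive hypothesis, $\{Q_u(t):t\in\Sigma^\ast\}$ is dense in $\T$ for every such $u$; translating this for the closed set $X=\overline{\{S(t):t\in\Sigma^\ast\}}$ yields the identity $X-u^{j_0}X=\T$ for every $u\in\Sigma^\ast$.

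The main obstacle is the concluding step: promoting the family of difference-set identities $X-u^{j_0}X=\T$ to the desired conclusion $X=\T$. This is not automatic, since an arbitrary closed set can satisfy $X-sX=\T$ without being all of $\T$ (for instance $X=[0,1/2]\subset\T$). I would address this by exploiting the specific structure of $X$ as a polynomial orbit closure of a non-lacunary semigroup. One natural route is to consider the closed invariant set $\overline{(\Sigma^\ast)^{j_0}\cdot X}\subseteq\T$ under the multiplicative action, apply Furstenberg's theorem to conclude that it is either all of $\T$ or a finite set of rational points, and then use the difference-set identities together with a minimality argument to exclude the finite rational alternative, thereby forcing $X=\T$ and completing the induction.
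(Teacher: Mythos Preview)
Your inductive strategy via the multiplicative ``differencing'' $Q_u(t)=S(ut)-u^{j_0}S(t)$ is natural, but the concluding step contains a genuine gap that your suggested route does not close. From the induction hypothesis you correctly obtain $X-u^{j_0}X=\T$ for every $u\in\Sigma^\ast$, where $X=\overline{\{S(t):t\in\Sigma^\ast\}}$. You then propose to apply Furstenberg's dichotomy to $Y:=\overline{(\Sigma^\ast)^{j_0}\cdot X}$, rule out the finite case (which is indeed easy, since $X$ finite would make $X-u^{j_0}X$ finite), and conclude $X=\T$. The problem is twofold. First, $X$ is \emph{not} $(\Sigma^\ast)^{j_0}$-invariant once $S$ has more than one monomial: $u^{j_0}S(t)=S(ut)-Q_u(t)$, and the error $Q_u(t)$ does not vanish. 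Second, even after passing to $Y$, the conclusion $Y=\T$ says nothing about $X$: for instance $X=[0,\tfrac12]\subset\T$ satisfies $\overline{\Sigma\cdot X}=\T$ for any non-lacunary $\Sigma$ containing $2$, yet $X\neq\T$. No ``minimality argument'' is indicated that would bridge $Y=\T$ back to $X=\T$, and I do not see one; the family of identities $X-u^{j_0}X=\T$ is simply too weak to force $X=\T$ on its own. (Your reduction step to all-irrational non-constant coefficients via an idempotent residue class is plausible but also not fully argued: one must check that \emph{some} idempotent $e\bmod N$ yields a non-lacunary $\Sigma\cap(N\Z+e)$, which requires choosing $e$ compatibly with the prime content of two multiplicatively independent elements of $\Sigma$.)

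The paper avoids this difficulty altogether by lifting to $\T^d$ rather than inducting on degree. Writing $P(n)=c_1n+\cdots+c_dn^d$, one studies the closure $A\subset\T^d$ of $\{(c_1s,c_2s^2,\ldots,c_ds^d):s\in\Sigma\}$, which \emph{is} invariant under the diagonal action of $\{(s,s^2,\ldots,s^d):s\in\Sigma\}$. A structural result (Proposition~\ref{P:densehigher}) shows that any such infinite closed invariant set contains an entire coordinate line $\{x+te_j:t\in\T\}$ through a rational point $x$; projecting under $(y_1,\ldots,y_d)\mapsto y_1+\cdots+y_d$ then gives density of $\{P(s)\}$ in $\T$. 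The key technical input is a blow-up argument near a rational accumulation point using the inhomogeneous quasi-norm $\nnorm{(x_1,\ldots,x_d)}=\sum_j|x_j|^{1/j}$, for which $\nnorm{(s,s^2,\ldots,s^d)\cdot x}=s\,\nnorm{x}$. This is precisely the missing idea in your approach: the non-lacunarity is exploited not through one-dimensional difference sets but through the homogeneous scaling structure in $\R^d$.
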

It should be noted that Theorem~\ref{T:dense}  follows from \cite[Theorem~1.2]{Kra}, but the argument presented here is somewhat different and more detailed. 

Taking $P(n):=n$ gives Furstenberg's result and taking $\Sigma:=\N$ gives Weyl's result.  On the other hand, even for the simplest non-linear cases, such as $P(n)=(n^2+n)\alpha$, where $\alpha$ is irrational, it is not clear how to proceed and prove density, since   the set $\{P(s) \colon s\in \Sigma\}$
does not  seem to satisfy any useful $\Sigma$-invariance property. A key maneuver in our argument is to instead work with the subset 
	$\{(s\alpha,s^2\alpha) \colon s\in \Sigma\}$ of $\T^2$, which is 
	invariant under multiplication by all elements of the set $\{(s,s^2)\colon s\in \Sigma\}$. 
	 In Proposition~\ref{P:densehigher}, we show that this invariance implies that the closure of this set contains a  line segment parallel to some coordinate axis. Theorem~\ref{T:dense} follows easily from this fact. 

Finally, we  note that more general $d$-dimensional versions of Theorem~\ref{T:dense} fail.
For example,  if $P_1,\ldots, P_d\in \R[t]$ are such that every non-trivial integer combination of the polynomials is a polynomial with at least one irrational non-constant coefficient, and $\Sigma$ is a non-lacunary semigroup of $\N$,  it is not always true that 
the set 
$$
\{(P_1(s),\ldots, P_d(s))\colon s\in \Sigma\}
$$
is dense in $\T^d$ (this holds for $\Sigma:=\N$ since equidistribution follows from Weyl's criterion). To see this, take $d=4$ and  
$\Sigma:=\{2^m3^n\colon m,n\in\N\}$, $P_j(n)=n\alpha_j$, $j=1,\ldots, 4$,   for the rationally independent 
reals $\alpha_1,\ldots, \alpha_4$ constructed in \cite[Lemma~4.2]{M}, so that the above set is not dense. 

\subsection{Notation} \label{SS:notation}
We denote the set of positive integers by $\N$.  We let $\T$ 
denote the one dimensional torus  $\R/\Z$, and we often identify it   with $[0,1)$.
We denote elements of $\T$ by real numbers and we  implicitly  assume that these real numbers are taken  modulo $1$.
 
If $ (t_1,\ldots, t_d)$ and $(x_1,\ldots, x_d)$ are two vectors in $\R^d$, we define their product by coordinate-wise multiplication as follows
$$
(t_1,\ldots, t_d)\cdot(x_1,\ldots, x_d):=(t_1x_1,\ldots, t_dx_d).
$$
Likewise,  if  $(k_1,\ldots, k_d)\in \Z^d$ and $(x_1,\ldots, x_d)\in \T^d$,   we write 
$$
(k_1,\ldots, k_d)\cdot (x_1,\ldots, x_d):=(k_1x_1,\ldots, k_dx_d).
$$
If $S$ is  a subset of $\Z^d$ and $A$ a subset of $\T^d$, we say that $A$ is {\em $S$-invariant} or {\em invariant under $S$} if $s \cdot x\in A$ for every $s\in S$, $x\in A$.  

\subsection{Acknowledgement} The authors would like to thank J.~Griesmer and R.~Alweiss whose questions related to the proof of  Theorem~\ref{T:recurrence} motivated the authors to write this manuscript. 


\section{Existence of rational points}\label{S:Rational}
 Furstenberg's Theorem implies  that if $\Sigma$ is a non-lacunary semigroup of $\N$, then a closed $\Sigma$-invariant subset of $\T$ always contains a rational point.  A natural question to consider is whether there are generalizations of this statement in higher dimensions. We prove that this is the case, and  use this result in the proofs of both  Theorems~\ref{T:polynomialrecurrence} and ~\ref{T:dense}. 
To give the precise statement, we need another definition.
\begin{definition}
 A point $x\in \T^d$ is \emph{rational} if its coordinates are rational. If $\Sigma$ is a subset of $\N$,   we say that the \emph{denominators of  a rational point $x$ are  relatively prime to $\Sigma$} if its coordinates are rationals with denominators relatively prime to each element of $\Sigma$. By convention, the denominator of $0$ is $1$.
\end{definition}
\begin{proposition}
	\label{prop:periodic}
	Let $\Sigma$ be a non-lacunary semigroup, $d\in \N$, and $\ell_1,\ldots, \ell_d\in \N$ (not necessarily distinct).  Let   $A$ be a non-empty closed subset of $\T^d$,  invariant under 
	$$
	\Sigma^{\ell_1,\ldots, \ell_d}:=\{(s^{\ell_1},\ldots, s^{\ell_d}):  s\in \Sigma\}.
	$$
	 Then $A$ contains a rational point with denominators relatively prime to   $\Sigma$.
\end{proposition}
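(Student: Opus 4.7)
The plan is to induct on the dimension $d$, with the base case $d=1$ obtained from Furstenberg's theorem combined with an elementary denominator-reduction, and the inductive step obtained by projecting onto the first $d-1$ coordinates and then analyzing a fiber.

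For the base case, I would first observe that $\Sigma^{\ell_1}:=\{s^{\ell_1}:s\in\Sigma\}$ is itself a non-lacunary sub-semigroup of $\N$, so by Furstenberg's theorem in the form recalled just after its statement in the excerpt, the closed $\Sigma^{\ell_1}$-invariant set $A\subseteq\T$ is either $\T$ or a finite set of rationals. If $A=\T$ then $0$ works. Otherwise, fix any $a/b\in A$ with $\gcd(a,b)=1$ and factor $b=b_1b_2$, where $b_1$ collects the prime factors of $b$ that divide some element of $\Sigma$ and $b_2$ is coprime to every element of $\Sigma$. For each prime $r\mid b_1$, pick $s_r\in\Sigma$ divisible by $r$, and set $s:=\prod_{r\mid b_1}s_r^{v_r(b_1)}\in\Sigma$, so that $b_1\mid s^{\ell_1}$. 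Then $s^{\ell_1}\cdot(a/b)\bmod 1$ lies in $A$ and its reduced denominator divides $b_2$, hence is coprime to $\Sigma$.

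For the inductive step, project $A$ via $\pi\colon\T^d\to\T^{d-1}$ onto its first $d-1$ coordinates. Then $\pi(A)$ is closed and $\Sigma^{\ell_1,\dots,\ell_{d-1}}$-invariant, so the inductive hypothesis produces a rational $y=(y_1,\dots,y_{d-1})\in\pi(A)$ whose denominators $q_1,\dots,q_{d-1}$ are each coprime to $\Sigma$. To pass to the fiber, I would introduce the stabilizer
$$
\Sigma'_y:=\bigl\{s\in\Sigma: s^{\ell_j}y_j=y_j \text{ in }\T \text{ for every }j=1,\dots,d-1\bigr\},
$$
which is a sub-semigroup of $\Sigma$. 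Because each $q_j$ is coprime to every $s\in\Sigma$, such an $s$ reduces to a unit modulo $q_j$, so some power $s^N$ lies in $\Sigma'_y$. In particular, taking multiplicatively independent $p,q\in\Sigma$, suitable powers in $\Sigma'_y$ remain multiplicatively independent, so $\Sigma'_y$ is non-lacunary; moreover, every prime dividing an element of $\Sigma$ also divides an element of $\Sigma'_y$.

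To conclude, I would consider the non-empty closed fiber $C:=\{z\in\T:(y,z)\in A\}$. For $s\in\Sigma'_y$ and $z\in C$, the definition of $\Sigma'_y$ together with the invariance of $A$ forces $(y,s^{\ell_d}z)\in A$, so $C$ is $(\Sigma'_y)^{\ell_d}$-invariant. The base case applied to $C$ with $\Sigma'_y$ in place of $\Sigma$ produces a rational $z\in C$ with denominator coprime to $\Sigma'_y$, and by the prime-content control established above this denominator is then coprime to every element of $\Sigma$. Then $(y,z)\in A$ is the desired rational point. The hard part will be the construction of $\Sigma'_y$: it must remain non-lacunary so the base case applies on the fiber, and it must simultaneously retain every prime appearing in $\Sigma$ so that coprimality to $\Sigma'_y$ upgrades to coprimality to the full semigroup $\Sigma$.
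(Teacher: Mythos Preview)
Your proposal is correct and follows essentially the same approach as the paper: induction on $d$, Furstenberg's theorem for the base case followed by a denominator reduction, and in the inductive step a projection together with a stabilizing sub-semigroup of $\Sigma$ that remains non-lacunary and retains all primes of $\Sigma$. The only cosmetic difference is the direction of the split: the paper projects onto a single coordinate, applies the $d=1$ case there, and then uses the inductive hypothesis on the resulting $d$-dimensional fiber, whereas you project onto the first $d-1$ coordinates, apply the inductive hypothesis there, and then use the $d=1$ case on the $1$-dimensional fiber; the paper also takes the concrete stabilizer $\Sigma_m=\Sigma\cap(m\Z+1)$ rather than your abstract $\Sigma'_y$, but the content is identical.
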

\begin{remark}
	For the proof of Theorem~\ref{T:recurrence}, only the case $\ell_1=\cdots=\ell_d=1$ is needed. The more general case is needed in the proofs of Theorems~\ref{T:polynomialrecurrence} and \ref{T:dense}.
\end{remark}
\begin{proof}
We proceed by induction on $d\in \N$.  Throughout, for  $\ell\in \N$ we let
	$$
	\Sigma^{\ell}:=\{s^{\ell}:  s\in \Sigma\}. 
	$$
	\subsubsection*{The case $d=1$} Since $\Sigma^{\ell_1}$ is a non-lacunary semigroup of $\N$,  by  Furstenberg's Theorem (see Section~\ref{SS:DensityFW}) we have that $A$ contains a rational point $x=a/b$ with $a\in\Z$ and $b\in\N$. Write $b=b'c$, where $b'$ divides some element $k$ of $\Sigma$ and $c$ is relatively prime to every element of $\Sigma$. Then $kx = \frac{a(k/b')} c$ and thus this point of $A$ is rational with  denominator  relatively prime to $\Sigma$.
		
	\subsubsection*{Suppose that the result holds for some  $d\in \N$} We show that it  holds for $d+1$.  Let $A$ be a non-empty closed subset of $\T^{d+1}$, invariant under $\Sigma^{\ell_1,\ldots, \ell_{d+1}}$, where $\ell_1,\ldots, \ell_{d+1}\in \N$.
	Let $B$ be the image of $A$ under the coordinate projection $$
	(x_1,\dots,x_d,x_{d+1})\mapsto x_{d+1}
	$$ of $\T^{d+1}$ onto $\T$. Then $B$ is a non-empty closed subset of $\T$, invariant under the non-lacunary semigroup  $\Sigma^{\ell_{d+1}}$, and hence by the case  $d=1$, 
	it contains a rational point  $y$ with denominator  relatively prime to $\Sigma$.
	
	Write
	$y=a/m$, where $a\in\Z$  and $m$ is a positive integer relatively prime to every element of $\Sigma$, and let
	$$
	\Sigma_m:=\Sigma\cap(m\Z+1).
	$$
	Then $\Sigma_m$ is a semigroup of $\N$ with the property that
	$$
	sy=y \quad \text{for every }s\in \Sigma_m.
	$$
	We claim that $\Sigma_m$ is non-lacunary. Indeed, if $p$ and $q$ are two multiplicatively independent integers belonging to $\Sigma$, then $p$ and $q$ are relatively prime to $m$ (because  the denominator of $y=a/m$ is relatively prime to $\Sigma$).  
	Hence,  by Euler's Theorem, there exist  integers $t$  and $n$ such that $p^t\equiv 1\pmod m$ and $q^n\equiv 1\pmod m$. Then $p^t$ and $q^n$ are multiplicatively independent and belong to $\Sigma_m$ and thus the semigroup $\Sigma_m$  is non-lacunary.

	We define the set 
	$$
	C:=\bigl\{(x_1,\dots,x_d)\in\T^d\colon(x_1,\dots,x_d,y)\in A\bigr\}.
	$$
	Then $C$ is a closed subset of $\T^d$,  and is non-empty since $y\in B$.  We claim that $C$ is invariant under the semigroup
	$$
	\Sigma_m^{\ell_1,\ldots, \ell_d}:=\{(s^{\ell_1},\ldots, s^{\ell_d}): s\in \Sigma_m\}.
	$$
	 Indeed,  if
	$(x_1,\dots,x_d)\in C$, using that  $s^{\ell_{d+1}}y=y$ for every $s\in \Sigma_m$ and the invariance of $A$ 
	 under $\Sigma^{\ell_1,\ldots, \ell_{d+1}}$, we have that 
	$$
	(s^{\ell_1}x_1,\dots,s^{\ell_d}x_d,y)=(s^{\ell_1}x_1,\dots,s^{\ell_d}x_d,s^{\ell_{d+1}}y)
	\in A \quad \text{for every }s\in \Sigma_m. 
	$$
	 Using the defining property of $C$, we deduce that    $(s^{\ell_1}x_1,\dots,s^{\ell_d}x_d)\in C$.
	
Using the induction hypothesis with $C$ substituted for $A$ and $\Sigma_m^{\ell_1,\ldots, \ell_d}$ substituted for $\Sigma^{\ell_1,\ldots, \ell_d}$, we deduce that $C$ contains a rational point $(y_1,\dots,y_d)$  with denominators   relatively prime to $\Sigma$. Then the point $(y_1,\dots,y_d,y)$ is rational with denominators relatively prime to  $\Sigma$, and belongs to $A$, as desired. 
\end{proof}

\section{Proof of  the Bohr-recurrence result}\label{S:ProofBohr}
\subsection{Proof of Theorem~\ref{T:polynomialrecurrence}}
Suppose that
$$
P(n)=c_1n+c_2n^2+\cdots+c_rn^r
$$ for some $r\in \N$ and
$c_1,\ldots, c_r\in \Z$,  not all of them zero. Let $\ve>0$, $d\in \N$,  and $x=(x_1,\ldots, x_d)\in \T^d$.  Let $A$ be the closure of the set
$$
\{(sc_1x_1,\dots,sc_1x_d,
s^2c_2x_1,\dots,s^2c_2x_d,
\dots,
s^rc_rx_1,\dots,s^rc_rx_d):  s\in \Sigma\}.
$$
Then $A$ is a non-empty closed subset of $\T^{rd}$ that is invariant under 
$$
\{(s,\ldots, s, s^2,\ldots, s^2,\ldots, s^r,\ldots, s^r): s\in \Sigma\},
$$
where each power of $s$ is repeated $d$ times.  Therefore, by  Proposition~\ref{prop:periodic}, $A$ contains a rational point $y\in \T^{rd}$.
Since $K$ contains  multiples of every positive integer, there exists $k\in K$ such that all the coordinates of $ky$ are integers. It follows that $0$ (as an element of $\T^{rd}$) belongs to the closure of the set
$$
\{(ksc_1x,(ks)^2c_2x,\ldots, (ks)^rc_r x): s\in \Sigma\}.
$$
Since
 $
 P(ks)\, x=ksc_1x+(ks)^2c_2x+\cdots+(ks)^rc_rx,
 $
 we have that 
 $$
 \norm{P(ks)\, x}_{\T^d}\leq \norm{ksc_1x}_{\T^d}+\norm{(ks)^2c_2x}_{\T^d}+\cdots+\norm{(ks)^rc_rx}_{\T^d}. 
 $$
It follows that $0$ (as an element of $\T^d$) belongs to the closure of the set
$$
\{ P(ks) \, x\colon k\in K: s\in \Sigma\bigr\}
$$
and so
\begin{equation}\label{E:Pks}
\norm{P(ks)\, x}_{\T^d}\leq\ve, \quad  \text{for some } k\in K, s\in \Sigma.
\end{equation}
The polynomial $P$ has finitely many zeros, all bounded by some integer $N\in\N$.
Estimate \eqref{E:Pks} also holds for the semigroup $\Sigma\cap(N,+\infty)$  instead of $\Sigma$, and thus replacing $\Sigma$ with this semigroup
 we deduce that  there exist $k\in K$ and $s\in \Sigma$ such that $P(ks)\neq 0$ and
 $\norm{P(ks)\, x}_{\T^d}\leq\ve$.  It follows that the set $\{ P(k s) \colon k\in K, s\in \Sigma\bigr\}$ is a set of  Bohr recurrence. 
\qed

\subsection{Non Bohr recurrence of $2^m3^n$ for totally ergodic  rotations}\label{S:NonBohr}
Although the set  $\{2^m3^n\colon m,n\in \N\}$ is not good for recurrence of rational rotations, it is reasonable to hope that it is good for recurrence for all $x\in \T^d$ that have rationally independent coordinates.
An even more optimistic conjecture is that it   is good for measurable recurrence for all totally ergodic systems. Unfortunately, we show that neither  claim holds. 

We claim that there exists  $x\in \T^4$ with rationally independent coordinates  and   $\varepsilon>0$ such that  $\norm{2^m3^nx}_{\T^4}\geq \varepsilon$ for every $m,n\in \N$.
Indeed,  using~\cite[Lemma~4.2]{M}, we have the existence of 
$y=(y_1,y_2,y_3,y_4)\in \R^4$ with rationally independent coordinates such  that for all but finitely many $m,n\in \N$ we have
 	$\{2^m3^ny_j\}\leq 1/10$
for some $j\in \{1,2,3,4\}$, where $\{t\}$ denotes the fractional part of $t\in \R$.
 	Let $x_j:=y_j+1/5$, $j\in
 	\{1,2,3,4\}$, and $x=(x_1,x_2,x_3,x_4)$; then obviously $x$ also has rationally independent coordinates. Since $\{2^m3^n/5\}\in [1/5,4/5]$
	for all $m,n\in \N$, it follows that for all but finitely many $m,n\in \N$ we have $\{2^m3^nx_j\}\in [1/5,9/10]$ for
 	some $j\in \{1,2,3,4\}$. Then $\norm{2^m3^nx}_{\T^4}\geq 1/10$ for all but finitely many $m,n\in \N$.
 	This immediately implies the claim.

 The situation does not improve much if we consider  sets of the form
 $$
 \Sigma:=\{p_1^{n_1}\cdots p_\ell^{n_\ell}\colon n_1,\ldots, n_\ell\in \N\}.
 $$
  Using~\cite[Lemma~4.2]{M}, we can show in a similar fashion that there exists $x\in \T^{2\ell}$ with rationally independent coordinates and $\varepsilon>0$ such that $\norm{sx}_{\T^{2\ell}}\geq \ve$ for every $s\in \Sigma$.

  However, the possibility that  the set  $\{2^m3^n\colon m,n\in \N\}$ is good for  measurable or topological  recurrence for
  systems without rotational factors (i.e. for weakly mixing systems) remains open. For measurable recurrence this problem is explicitly stated in~\cite[Question~3]{B}.

 \section{Proof of  the density result}
 Our goal in this section is to prove Theorem~\ref{T:dense}.

\subsection{A reduction}
Theorem~\ref{T:dense} is a direct consequence of the following higher dimensional density result. This higher dimensional setting has the advantage of some dilation invariance that the $1$-dimensional setting lacks. 

Let $e_1,\ldots, e_d$ denote the standard unit vectors of $\T^d$ or  $\R^d$, depending on the context.
\begin{proposition}
	\label{P:densehigher}
	Let $d\in \N$,  $\Sigma$ be a non-lacunary semigroup of $\N$, and 
	$A$ be a closed 
	infinite subset of $\T^d$ that is invariant under   
	$$
	\{(s,s^2,\ldots, s^d)\colon s\in \Sigma\}.
	$$   
	Then 
	$
	\{x+te_j\colon t\in \T\}\subset A
	$
	for some   rational point $x\in \T^d$ and  $j\in \{1,\ldots, d\}$. 
\end{proposition}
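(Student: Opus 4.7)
The plan is to combine Proposition~\ref{prop:periodic} with an infinitesimal limit extraction and a Weyl-type equidistribution argument.

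First, I would reduce to the case $0 \in A$ is an accumulation point. The set $A'$ of accumulation points of $A$ is a non-empty, closed, $\Sigma$-invariant, infinite subset of $A$ by Bolzano--Weierstrass (invariance because the action is continuous and cannot collapse sequences converging to an accumulation point). Applying Proposition~\ref{prop:periodic} to $A'$, I pick a rational accumulation point $x_0 \in A$ with denominators coprime to $\Sigma$; the Euler-theorem argument from the proof of Proposition~\ref{prop:periodic} then yields a non-lacunary subsemigroup $\Sigma_M := \Sigma \cap (M\Z + 1)$ (with $M$ the LCM of the denominators of $x_0$) whose $(s, s^2, \ldots, s^d)$-action fixes $x_0$. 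After replacing $A$ by $A - x_0$ and $\Sigma$ by $\Sigma_M$, I may assume $0 \in A$ is an accumulation point of the closed infinite $\Sigma$-invariant set $A$.

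Next, I would extract an infinitesimal limit curve. Picking $c_n \in A \setminus \{0\}$ with $c_n \to 0$ (with coordinate lifts in $(-1/2, 1/2]$) and passing to subsequences, I stabilize the support $J := \{j : c_n^{(j)} \neq 0\}$, the signs, an index $j_0 \in J$ maximizing $|c_n^{(j)}|^{1/j}$ over $J$, and the limits $\mu_j := \lim_n c_n^{(j)} / \bigl(\mathrm{sign}(c_n^{(j_0)}) \cdot |c_n^{(j_0)}|^{j/j_0}\bigr) \in [-1, 1]$ (so $\mu_{j_0} = 1$). For each real $V > 0$, non-lacunarity of $\Sigma$ provides $s_n \in \Sigma$ with $s_n \sim (V/|c_n^{(j_0)}|)^{1/j_0}$, and a direct computation yields $(s_n, s_n^2, \ldots, s_n^d) \cdot c_n \to \gamma(V)$ in $\T^d$, where $\gamma(V)^{(j)} = V^{j/j_0} \mu_j \pmod 1$ for $j \in J_1 := \{j \in J : \mu_j \neq 0\}$ and $\gamma(V)^{(j)} = 0$ otherwise. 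Since $A$ is closed, the continuous curve $\gamma((0, \infty))$ lies in $A$.

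Finally, I would show the closure of $\gamma((0, \infty))$ in the sub-torus $\T^{J_1} \subset \T^d$ equals $\T^{J_1}$. Reparameterizing $W := V^{1/j_0}$ gives the curve $W \mapsto (W^j \mu_j)_{j \in J_1}$. Fixing a generic irrational $x \in [0, 1)$ and restricting to the subsequence $W_k := (x+k)^{1/j_0}$, the $j_0$-coordinate becomes $x \pmod 1$ and the remaining coordinates $((x+k)^{j/j_0} \mu_j)_{j \in J_1 \setminus \{j_0\}}$ are jointly equidistributed in $\T^{|J_1|-1}$, combining Weyl's equidistribution theorem for polynomial sequences (when $j_0 \mid j$) with the classical equidistribution of $n^c \pmod 1$ for $c > 0$, $c \notin \N$ (when $j_0 \nmid j$). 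Hence the closure equals $\T^{J_1}$, which contains the coordinate circle $\T e_{j_0}$ through the origin; undoing the initial translation, $A$ contains the line $x_0 + \T e_{j_0}$ through the rational point $x_0$ parallel to the $j_0$-th axis. The main obstacle will be this joint equidistribution step, which requires verifying the Weyl criterion when the exponents $j/j_0$ mix integer and non-integer rationals and the $\mu_j$ can themselves be rational, together with a genericity argument for $x$ avoiding the countable exceptional set in which the relevant leading coefficients become rational.
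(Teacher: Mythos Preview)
Your proposal follows the same route as the paper's proof. Both arguments (i) pass to the derived set $A'$ and apply Proposition~\ref{prop:periodic} to locate a rational accumulation point with denominators coprime to $\Sigma$, (ii) translate by that point and restrict to the subsemigroup $\Sigma\cap(M\Z+1)$ to reduce to the case where $0$ is an accumulation point, (iii) extract from a sequence $c_n\to 0$ a limiting ``direction'' and use non-lacunarity of $\Sigma$ to show that the entire curve $t\mapsto (t^j\mu_j)_j$ lies in (the lift of) $A$, and (iv) argue that the closure of this curve in $\T^d$ contains a coordinate circle. The paper packages step (iii) via the quantity $\nnorm{x}=\sum_j|x_j|^{1/j}$ rather than your dominant-coordinate normalization, but after the substitution $t=V^{1/j_0}$ your curve $\gamma$ coincides with the paper's.

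The only substantive difference is in (iv), and it makes your version harder than necessary. By pinning down the $j_0$-th coordinate via $W_k=(x+k)^{1/j_0}$, you are forced into a joint equidistribution problem for sequences $((x+k)^{j/j_0}\mu_j)_j$ with mixed integer and non-integer rational exponents; this can indeed be carried out (the relevant linear combinations lie in a Hardy field and succumb to van der Corput differencing together with your genericity-of-$x$ argument), but it is exactly the obstacle you yourself flag. The paper sidesteps this by pinning down the coordinate of exponent~$1$ instead: writing the curve as $(tu_1,t^2u_2,\ldots,t^du_d)$, substituting $t'=tu_1$ and then $t'=x_1+k+\delta_0$ with a generic $\delta_0$, every remaining coordinate becomes a genuine polynomial in $k$, and a single application of Weyl's theorem suffices (this is Lemma~\ref{L:tdense}). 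As a minor aside, your claim that $A'$ is infinite need not hold, but you only use that it is non-empty, closed and invariant, so this is harmless.
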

\begin{proof}[Proof of Theorem~\ref{T:dense} assuming Proposition~\ref{P:densehigher}]
	Let $\varepsilon>0$ and $\alpha \in \T$ be arbitrary.  It suffices to show that there exists $s\in\Sigma$ such that $\norm{P(s)-\alpha}_\T\leq \ve$. 
	
	We can assume that $P(0)=0$ and write $P(n)=c_1n+\cdots+c_dn^d$ for some $d\in \N$ and real numbers  $c_1,\ldots, c_d$, at least one of which is irrational. 
	Let  $A$ be the closure of the set 
	$$
	\{(c_1s,c_2s^2,\ldots, c_ds^d)\colon n\in \Sigma\},
	$$ 
	considered as a subset of $\T^d$. Clearly $A$ is invariant under the map  $x\mapsto (s,s^2,\ldots, s^d)  \cdot x$ for every $s\in \Sigma$, and since at least one of  the $c_1,\ldots, c_d$ is irrational the set $A$ is infinite.  It follows from Proposition~\ref{P:densehigher} that  there exist some rational point $x=(x_1,\ldots, x_d)\in \T^d$ and  $j_0\in \{1,\ldots, d\}$ such that  $\{x+te_{j_0}\colon t\in \T\}\subset A$. Hence, for $t:=\alpha-\sum_{1\leq j\leq d}x_j\pmod{1}$  there exists $s\in \Sigma$ such that 
	$$
	\norm{c_js^j-x_j}_\T\leq \frac{\ve}{d} \, \text{ for } \, j\neq j_0  \quad 
	\text{ and  } 	\quad \norm{c_{j_0}s^{j_0}-\alpha+\sum_{1\leq j\leq d,\,  j\neq j_0}x_j}_\T\leq \frac{\ve}{d}.
	$$ 
	Then 
	$$
	\norm{P(s)-\alpha}_\T\leq \sum_{1\leq j\leq d,\,  j\neq j_0}\norm{c_js^j-x_j}_\T+ \norm{c_{j_0}s^{j_0}-\alpha+\sum_{1\leq j\leq d,\,  j\neq j_0}x_j}_\T\leq \ve, 
	$$
	completing the proof. 
\end{proof}

\subsection{A key density property on $\T^d$} The main goal in this subsection is to prove Proposition~\ref{P:dense}. In its proof we use  the following  fact:
\begin{lemma}\label{L:tdense}
	For every $d\in \N$ and $u_1,\ldots, u_d\in \R\setminus\{0\}$, the set 
	$$
	\{(tu_1+k_1,t^2u_2,\ldots, t^du_d+k_d)\colon t\in (0,+\infty), k_1,\ldots, k_d\in \Z\}
	$$
	is dense  in $\R^d$.
\end{lemma}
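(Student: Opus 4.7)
The plan is to establish the lemma by induction on $d$, with a strengthened inductive hypothesis that allows $t$ to be taken arbitrarily large. Concretely, I would prove that for every $T>0$, the set
$$
S_d(T):=\{(tu_1+k_1,\ldots,t^du_d+k_d):t>T,\ k_1,\ldots,k_d\in\Z\}
$$
is dense in $\R^d$. The base case $d=1$ is immediate: as $t>T$ varies, $tu_1$ ranges over an unbounded half-line, and its $\Z$-translates cover $\R$.

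For the inductive step, I would fix a target $(a_1,\ldots,a_d)\in\R^d$ and $\ve>0$, and first apply the inductive hypothesis to the first $d-1$ coordinates with a threshold $T'$ chosen sufficiently large, obtaining $t_0>T'$ and integers $k_1^{(0)},\ldots,k_{d-1}^{(0)}$ with $|t_0^ju_j+k_j^{(0)}-a_j|<\ve/2$ for $j\le d-1$. Then I would perturb $t_0$ to $t=t_0+s$ with $|s|\le\eta$: to leading order in $s$, the change in the $j$-th coordinate is $jt_0^{j-1}u_js$, so the last coordinate moves roughly a factor of $t_0$ faster than the $(d-1)$-st, a separation of scales that is the crux of the argument. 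The parameter $\eta$ is chosen small enough that $|(t_0+s)^ju_j-t_0^ju_j|<\ve/2$ for every $j<d$, but large enough that $(t_0+s)^du_d$ sweeps out an interval of length exceeding $1$ as $s$ ranges over $[-\eta,\eta]$; by the intermediate value theorem there then exist $s\in[-\eta,\eta]$ and $k_d\in\Z$ with $(t_0+s)^du_d+k_d=a_d$ exactly. The two constraints on $\eta$ turn out to be compatible whenever $t_0\gtrsim |u_{d-1}|/(|u_d|\ve)$, which is secured by taking $T'$ large enough.

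The main obstacle is threading the lower bound $T$ through the induction: the perturbation step requires $t_0$ as large as desired, which forces us to carry a free parameter $T$ in the hypothesis rather than use plain density. An alternative route would be to prove plain density (without the $T$-cutoff) by the same perturbation argument and then upgrade to the strong form: for $d\ge 2$ the set $\{(tu_1,\ldots,t^du_d):0<t\le T\}+\Z^d$ is closed and nowhere dense in $\R^d$ (being a countable union of $\Z^d$-translates of a compact $1$-dimensional curve, hence of Lebesgue measure zero), so removing it from the dense set $\{(tu_1,\ldots,t^du_d):t>0\}+\Z^d$ leaves a dense remainder, which is exactly $S_d(T)$.
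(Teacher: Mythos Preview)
Your inductive perturbation argument is correct and takes a genuinely different route from the paper. The paper does not induct on $d$; instead it first rescales $t$ to reduce to $u_1=1$, then writes $t=k+x_1+\delta_0$ with $k\in\N$ and a carefully chosen $\delta_0\in(0,\ve)$, so that the first coordinate is automatically within $\ve$ of $x_1$. For the remaining coordinates it invokes Weyl's equidistribution theorem for the polynomial sequence $\bigl((k+x_1+\delta_0)^2u_2,\ldots,(k+x_1+\delta_0)^du_d\bigr)_{k\in\N}$ in $\T^{d-1}$, after arranging (by a countability argument on $\delta_0$) that every nontrivial integer combination of these polynomials has an irrational non-constant coefficient. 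Your approach is more elementary---it needs only the intermediate value theorem and the separation of scales between the $(d-1)$-st and $d$-th coordinates, avoiding Weyl entirely---at the price of carrying the threshold $T$ through the induction. The paper's route is shorter to write and also yields the ``$t$ arbitrarily large'' conclusion for free (since $k_0$ may be chosen large), but your argument makes transparent the mechanism by which each new coordinate is captured without disturbing the earlier ones. Your alternative upgrade from plain density to density with $t>T$ is fine too; just note that the $\Z^d$-translates of the compact curve $\{(tu_1,\ldots,t^du_d):0\le t\le T\}$ form a \emph{locally finite} family, which is what makes their union closed (and then closed plus Lebesgue-null gives nowhere dense).
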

\begin{proof}
	We can assume that $u_1>0$.
	First, note that if we let $t':= t u_1$ and $u_j':=u_j/u_1^j$, for $j=2,\ldots, d$, we see that it is sufficient to verify the stated property when $u_1=1$. 
	We work with this assumption from now on.  
	
	Let   $x:=(x_1,\ldots, x_d)\in \R^d$. It suffices to show that for every $\ve>0$ there exists $t>0$ such that 
	\begin{equation}\label{E:xdense}
		\norm{t-x_1}_\T\leq \ve, \, \norm{t^2u_2-x_2}_\T\leq \ve, \ldots, \, \norm{t^du_d-x_d}_\T\leq \ve. 
	\end{equation}

	So let $\ve>0$. First, we claim that there exists $\delta_0\in (0,\ve)$ such that every non-trivial integer combination of the polynomials (in $k$)
	$$
	(k+x_1+\delta_0)^2u_2,\ldots, \, (k+x_1+\delta_0)^du_d
	$$
	has at least one irrational non-constant coefficient.  Indeed, note that modulo rational multiples, the coefficients of the (degree $1$) monomial $k$ in these polynomials are respectively 
	$$
	(x_1+\delta_0)u_2,  \, (x_1+\delta_0)^2u_3,\ldots,\, (x_1+\delta_0)^{d-1} u_d.
	$$
	So it suffices to choose $\delta_0$ such that all non-trivial integer combinations of these numbers are irrational.  Such a choice of $\delta_0$ exists, since for any choice of $\ell_1,\ldots, \ell_d\in \Z$, not all of  them  $0$, the equation
	$$
	\ell_1u_2\delta+\ell_2u_3\delta^2+\cdots+\ell_du_d\delta^{d-1}=0
	$$
	has finitely many solutions in the variable $\delta$, so there exists $\delta_0\in (0,\ve)$ such that $x_1+\delta_0$ avoids all  these (countably many) solutions. This proves the claim.

	For this choice of $\delta_0$,  Weyl's criterion~\cite{W16}  gives  that the sequence
	$$
	\big((k+x_1+\delta_0)^2u_2,\ldots, (k+x_1+\delta_0)^du_d\big)_{k\in \N} 
	$$
	is  equidistributed in $\T^d$ and thus dense in $\T^d$. It follows that there exists an integer $k_0>|x_1|+\delta_0$  such that 
	$$
	\norm{(k_0+x_1+\delta_0)^2-x_2}_\T\leq\ve, \ldots, \norm{(k_0+x_1+\delta_0)^d-x_d}_\T\leq\ve.
	$$
	Letting $t:=k_0+x_1+\delta_0$, and recalling that $\delta_0\in (0,\ve)$,  we deduce  that \eqref{E:xdense} holds, completing the proof. 
\end{proof}

\begin{proposition}
	\label{P:dense}
	Let $d\in \N$,  $\Sigma$ be a non-lacunary semigroup of $\N$, and 
	$A$ be a closed 
	subset of $\T^d$,  invariant under  
	$$
	\{(s,s^2,\ldots, s^d)\colon s\in \Sigma\}.
	$$   
	If $0$ is a non-isolated point of $A$, then $\{te_j\colon t\in \T\}\subset A$
	for some   $j\in \{1,\ldots, d\}$.
\end{proposition}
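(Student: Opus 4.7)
The plan is to construct a one-parameter family of points of $A$ accumulating at $0$ and then invoke Lemma~\ref{L:tdense} to conclude that its closure is a subtorus of $\T^d$ containing a coordinate axis. To begin, since $0$ is non-isolated in $A$, pick $y^{(n)}\in A\setminus\{0\}$ with $y^{(n)}\to 0$, and lift to representatives $u^{(n)}\in[-1/2,1/2)^d$ so that $u^{(n)}\to 0$ in $\R^d$. Set $v_j^{(n)}:=|u_j^{(n)}|^{1/j}$. Passing to a subsequence, I may assume (i) the index $j_0$ attaining $\max_j v_j^{(n)}$ is constant in $n$, (ii) the ratios $v_j^{(n)}/v_{j_0}^{(n)}$ converge to some $r_j\in[0,1]$ with $r_{j_0}=1$, and (iii) the signs $\epsilon_j:=\mathrm{sign}(u_j^{(n)})\in\{-1,+1\}$ are constant whenever $u_j^{(n)}\neq 0$.

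Next, for each fixed $c>0$ I produce a corresponding point of $A$. Since $c/v_{j_0}^{(n)}\to\infty$, the non-lacunarity of $\Sigma$ (equivalently $s_{k+1}/s_k\to 1$) yields $s_n=s_n(c)\in\Sigma$ with $s_n v_{j_0}^{(n)}\to c$. Writing $u_j^{(n)}=\epsilon_j^{(n)}(v_j^{(n)})^j$, the identity
\[
s_n^j u_j^{(n)}=\epsilon_j^{(n)}\bigl(s_n v_{j_0}^{(n)}\cdot v_j^{(n)}/v_{j_0}^{(n)}\bigr)^j\longrightarrow \epsilon_j c^j r_j^j
\]
holds in $\R$, and hence modulo $1$. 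Since $(s_n^j u_j^{(n)})\bmod 1$ lies in the closed invariant set $A$, the limit $(\epsilon_j c^j r_j^j\bmod 1)_{j=1}^d$ belongs to $A$. Letting $c$ range over $(0,\infty)$ shows that the curve $C:=\{(\epsilon_j c^j r_j^j\bmod 1)_{j=1}^d\colon c>0\}$ is contained in $A$.

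Finally I use Lemma~\ref{L:tdense} to conclude. Let $J:=\{j\colon r_j>0\}$, which contains $j_0$; note that the $j$-th coordinate of $C$ vanishes for $j\notin J$. Applying Lemma~\ref{L:tdense} with $d=\max J$, using the nonzero values $\epsilon_j r_j^j$ for $j\in J$ and arbitrary nonzero values for the remaining indices in $\{1,\dots,\max J\}$, the set $\{(c^j u_j\bmod 1)_{j=1}^{\max J}\colon c>0\}$ is dense in $\T^{\max J}$. Projecting to the coordinates in $J$, the projection of $C$ is dense in $\T^J$; combined with the vanishing of the other coordinates, $\overline{C}$ equals the subtorus $\{x\in\T^d\colon x_j=0 \text{ for all } j\notin J\}$. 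This subtorus is therefore contained in $A$, and since $j_0\in J$ it contains the axis $\{te_{j_0}\colon t\in\T\}$, as required.

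The principal obstacle is the case $|J|\geq 2$, where several $v_j^{(n)}$ have comparable magnitude: the scaling argument alone produces only a one-dimensional curve in $A$ rather than an axis. Lemma~\ref{L:tdense} resolves this by exploiting the fact that the coordinates of $C$ are power functions of $c$ of distinct integer degrees, which forces the closure of $C$ to fill out a full subtorus.
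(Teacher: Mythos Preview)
Your proof is correct and follows essentially the same strategy as the paper: lift to $\R^d$, extract a limiting ``direction'' from a sequence in $A$ approaching $0$ (you normalize by $\max_j |u_j^{(n)}|^{1/j}$ where the paper uses the quantity $\nnorm{x}=\sum_j |x_j|^{1/j}$), use non-lacunarity of $\Sigma$ to scale into the curve $\{(\epsilon_j r_j^j c^j)_j : c>0\}\subset A$, and then invoke Lemma~\ref{L:tdense}. Your handling of the degenerate case where some $r_j=0$ via the padding-and-projection trick is slightly more explicit than the paper's, but otherwise the arguments coincide.
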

\begin{proof}
	For $d=1$,  this follows from  Furstenberg's Theorem (see Section~\ref{SS:DensityFW}). 
	For general $d\in \N$, we argue as follows using  ideas from~\cite{F} and \cite{Y}, with the caveat that 	we are not working with the  Euclidean norm but with the quantity defined in \eqref{E:norm}. 
	
	Let $\pi\colon\R^d\to\T^d=\R^d/\Z^d$ be the natural projection and let $B=\pi\inv(A)=A+\Z^d$. Then for every $s\in\Sigma$, the closed subset $B$ of $\R^d$ is invariant under the map $x\mapsto (s,s^2,\ldots, s^d)  \cdot x$, and the point $0$ of $\R^d$ is a non-isolated point of $B$. 
	
	For $x=(x_1,\ldots, x_d)\in \R^d$ let 
	\begin{equation}\label{E:norm}
		\nnorm{x}:=|x_1|+|x_2|^{\frac{1}{2}}+\cdots+ |x_d|^{\frac{1}{d}}.
	\end{equation}
	Note that $\nnorm{\cdot}$ satisfies   the triangle inequality and we have the following  identity
	\begin{equation}\label{E:normid}
		\nnorm{(tx_1,t^2x_2,\ldots, t^dx_d)}=|t|\nnorm{(x_1,x_2,\ldots, x_d)}.
	\end{equation}
	Note also that a sequence of vectors in $\R^d$ converges to a vector $x$ with respect to the Euclidean norm if and only if it converges to $x$ with respect to the distance associated with  $\nnorm{\cdot}$;  we use this fact without further reference.
	
	For every $\ve>0$, we let
	\begin{equation}\label{E:De}
		D_\ve:=\Bigl\{ \Big(\frac{x_1}{\nnorm{x}}, \frac{x_2}{\nnorm{x}^2},\ldots, \frac{x_d}{\nnorm{x}^d}\Big)
		\colon x\in B,\ 0<\nnorm{x}\leq\ve\Bigr\}
	\end{equation}
	and
	\begin{equation}\label{E:D}
		D:=\bigcap_{\ve >0}\overline{D_\ve}.
	\end{equation}
	Then $D$ is a closed subset of the compact set  $\{x\in\R^d\colon \nnorm{x}=1\}$.
	Since, by assumption, $0$ is a non-isolated point of $A$ (this is the only point where this assumption is used), for every $\ve>0$ the compact set $\overline{D_\ve}$ is non-empty, so the set $D$ is non-empty.

	We claim that for every $u\in D$ and every $t>0$ we have $(t,t^2,\ldots, t^d)\cdot u\in B$ (a similar statement holds for every $t\in \R$ but we do not need this). 
	So let $u=(u_1,\ldots, u_d)\in D$, $t>0$, and $\varepsilon>0$. 
	It suffices to show that  there exists $b\in B$ with 
	\begin{equation}\label{E:needed} 
		\nnorm{(t,t^2,\ldots, t^d)\cdot u-b}\leq \ve.
	\end{equation}

	Let $\Sigma=\{s_1<s_2<\cdots \}$. Since $\Sigma$ is  a non-lacunary semigroup of $\N$,  we have $\lim_{n\to\infty} s_n=\infty$ and  $\lim_{n\to\infty}\frac{s_{n+1}}{s_n}=1$.
	Hence, for every $\delta>0$  (to be chosen later, depending on $\ve$ and $t$ only) there exists $M=M(\delta)\geq s_1$ such that if $s_{n+1}>M$ we have 
	$$
	| s_{n+1}^j-s_n^j|\leq \delta s_n^j, \quad \text{for } j=1,\ldots, d. 
	$$
	It follows that  for every real $\sigma\geq M$, there exists $s\in\Sigma$ with 
	\begin{equation}\label{E:ssigma}
		|s^j-\sigma^j|\leq \delta \cdot \sigma^j, \quad \text{ for } j=1,\ldots, d.
	\end{equation}
	Indeed,  we can choose $s:=s_{n_0}$ where $n_0\in \N$ satisfies $s_{n_0}\leq \sigma< s_{n_0+1}$, then  $s_{n_0+1}>M$ and we have 	$|\sigma^j-s_{n_0}^j|\leq |s_{n_0+1}^j-s_{n_0}^j|\leq \delta s_{n_0}^j\leq \delta\sigma^j$.
	
	Since $u$ belongs to $D$,  by~\eqref{E:De} and~\eqref{E:D} there exists $ x=(x_1,\ldots, x_d)\in B$, depending on $\ve, t,u,M$,  with
	\begin{equation}\label{E:2}
		0< \nnorm{x}\leq \frac{t}{M} \quad  \text{and} \quad \nnorm{ \big(\frac{x_1}{\nnorm{x}}, \frac{x_2}{\nnorm{x}^2},\ldots, \frac{x_d}{\nnorm{x}^d}\big)-u}\leq \frac{\ve}{2t}.
	\end{equation}

	Applying \eqref{E:ssigma}  with $\delta:= \frac{\ve^d}{2^d(1+t^d)}  $ and    $\sigma:=\frac{t}{\nnorm{x}}\geq M$ (by \eqref{E:2}),  it follows  that there exists $s\in\Sigma$  (depending on $\varepsilon, t, x$) such that 
	\begin{equation}\label{E:3}
		\Bigl|s^j-\frac{t^j}{\nnorm{x}^j}\Bigr|\leq  \delta \cdot\frac{t^j}{\nnorm{x}^j} =\frac{\ve^d}{2^d\nnorm{x}^j}, \quad \text{ for } j=1,\ldots, d.
	\end{equation}

	If we add and subtract $\big(\frac{t x_1}{\nnorm{x}},\frac{t^2x_2}{\nnorm{x}^2},\ldots, \frac{t^dx_d}{\nnorm{x}^d} \big)$ in the expression below and  use the triangle inequality,  we get 
	\begin{multline}\label{E:ts}
	\nnorm{(tu_1,t^2u_2,\ldots, t^du_d)-(sx_1,s^2x_2,\ldots, s^dx_d))}\leq \\
\nnorm{\Big(t\big(\frac{ x_1}{\nnorm{x}}-u_1\big),t^2\big(\frac{ x_2}{\nnorm{x}^2}-u_2\big),\ldots, t^d\big(\frac{ x_d}{\nnorm{x}^d}-u_d\big) \Big)}+\\ 		
\nnorm{\big(x_1\big(\frac{t}{\nnorm{x}}-s\big),x_2\big(\frac{t^2}{\nnorm{x}^2}-s^2\big),
		\ldots, x_d\big(\frac{t^d}{\nnorm{x}^d}-s^d\big) \big)} \leq\ve,
\end{multline}
	where the last estimate follows from \eqref{E:2} and \eqref{E:3} as follows: By \eqref{E:normid} the first term   is  equal to 
	$$
	t \, \large\nnorm{ \big(\frac{x_1}{\nnorm{x}}, \frac{x_2}{\nnorm{x}^2},\ldots, \frac{x_d}{\nnorm{x}^d}\big)-u}\leq \frac{\varepsilon}{2},
	$$
	where the last estimate follows from  \eqref{E:2}.
	By \eqref{E:norm} the second term is equal to 
	$$
	|x_1| \Big|\frac{t}{\nnorm{x}}-s\Big| + |x_2|^{\frac{1}{2}} \Big|\frac{t^2}{\nnorm{x}^2}-s^2\Big|^{\frac{1}{2}}+\cdots+
	|x_d|^{\frac{1}{d}} \Big|\frac{t^d}{\nnorm{x}^d}-s^d\Big|^{\frac{1}{d}},
	$$
	and using  \eqref{E:3} we can  bound this by 
	$$
	\frac{\ve}{2\nnorm{x}} ( |x_1| + |x_2|^{\frac{1}{2}}+\cdots+ |x_d|^{\frac{1}{d}} )=\frac{\ve}{2}.
	$$
	It follows from \eqref{E:ts} that \eqref{E:needed} holds for  $b:=(s,s^2,\ldots, s^d)\cdot  x$, which is an element of  $B$ since $x\in B$ and $B$ is invariant under the map 
	$x\mapsto (s,s^2,\ldots, s^d) \cdot  x$. This  completes the  proof of the claim.
	
	Let $u$ now be any element in $D$. 
	We have just shown that  the set  $C:=\{(t,t^2,\ldots, t^d)\cdot u\colon t>0\}$ is contained  in $B$ and thus the closure  of its image $\pi(C)$ under the natural projection $\pi\colon \R^d\to \T^d$ is contained in $A=\pi(B)$. 
	Since $u\in D$ we have $\nnorm{u}=1$, hence there exists $j_0\in \{1,\ldots, d\}$ such that $u_{j_0}\neq 0$.  We deduce from this and  Lemma~\ref{L:tdense} that   $\{te_{j_0}\colon t\in \R\}\subset C+\Z^d\subset B$, hence $\{te_{j_0}\colon t\in \T\}\subset A $. This completes the proof. 
\end{proof}
We deduce the following:
\begin{corollary}
	\label{C:line}
	Let $d\in \N$,  $\Sigma$ be a non-lacunary semigroup of $\N$, and 
	$A$ be a closed 
	subset of $\T^d$ that is invariant under   
	$$
	\{(s,s^2,\ldots, s^d)\colon s\in \Sigma\}.
	$$   
	If $x$ is a non-isolated  rational point of $A$,  with denominators relatively  prime to $\Sigma$, then 
	$\{x+te_j\colon t\in \T\}\subset A$ 	for some   $j\in \{1,\ldots, d\}$. 
\end{corollary}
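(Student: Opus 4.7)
The plan is to reduce Corollary~\ref{C:line} to Proposition~\ref{P:dense} by translating $x$ to the origin and replacing $\Sigma$ with a suitable subsemigroup that fixes $x$ coordinate-wise under the action $s \mapsto (s, s^2, \ldots, s^d)$.

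First, I would write the coordinates of $x$ as $x_j = a_j/m_j$ with each $m_j$ relatively prime to every element of $\Sigma$, and set $m := m_1 m_2 \cdots m_d$ (or the lcm). Then $m$ is also relatively prime to every element of $\Sigma$. Define
$$\Sigma_m := \Sigma \cap (m\Z + 1).$$
The argument from the proof of Proposition~\ref{prop:periodic} shows $\Sigma_m$ is a non-lacunary semigroup: if $p, q \in \Sigma$ are multiplicatively independent, both are coprime to $m$, so Euler's theorem yields $t, n \in \N$ with $p^t, q^n \equiv 1 \pmod{m}$, giving two multiplicatively independent elements of $\Sigma_m$. Moreover, for any $s \in \Sigma_m$ and any coordinate $j$ we have $s \equiv 1 \pmod{m_j}$, hence $s^i x_j \equiv x_j \pmod{1}$ for every $i \geq 1$. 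Thus $(s, s^2, \ldots, s^d) \cdot x = x$ for every $s \in \Sigma_m$.

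Next, I would consider the translated closed set $A' := A - x \subset \T^d$. Since $x$ is a non-isolated point of $A$, the point $0$ is non-isolated in $A'$. For the invariance, take $y \in A'$, so $y + x \in A$. For $s \in \Sigma_m$, invariance of $A$ under $\Sigma^{1, 2, \ldots, d}$ gives
$$(s, s^2, \ldots, s^d) \cdot y + (s, s^2, \ldots, s^d) \cdot x = (s, s^2, \ldots, s^d) \cdot (y + x) \in A,$$
and since $(s, s^2, \ldots, s^d) \cdot x = x$, we conclude $(s, s^2, \ldots, s^d) \cdot y \in A - x = A'$. Hence $A'$ is invariant under $\{(s, s^2, \ldots, s^d) : s \in \Sigma_m\}$.

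Finally, Proposition~\ref{P:dense} applied to $A'$ and the non-lacunary semigroup $\Sigma_m$ yields some $j \in \{1, \ldots, d\}$ with $\{t e_j : t \in \T\} \subset A'$, which translates back to $\{x + t e_j : t \in \T\} \subset A$, as desired. The only delicate step is the verification that $\Sigma_m$ is non-lacunary and fixes $x$ under the twisted action; both follow cleanly from the coprimality of the denominators of $x$ with $\Sigma$, so I expect no serious obstacle beyond ensuring that the reduction is correctly set up.
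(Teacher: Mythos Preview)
Your proposal is correct and follows essentially the same route as the paper: translate $A$ by $-x$, pass to the non-lacunary subsemigroup $\Sigma_m=\Sigma\cap(m\Z+1)$ that fixes $x$ under the action $s\mapsto(s,s^2,\ldots,s^d)$, and then apply Proposition~\ref{P:dense}. The paper's proof is terser (it simply cites the argument from Proposition~\ref{prop:periodic} for the properties of $\Sigma_m$), but the content is identical.
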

\begin{proof}
	As in the proof of Proposition~\ref{prop:periodic} we let  $m$ be an integer that is  relatively prime to every element of $\Sigma$ and such that the coordinates of  $x$ remain invariant when multiplied by any element of the  non-lacunary semigroup $\Sigma_m:=\Sigma\cap (m\Z+1)$. Then the set $A-x$ is closed,  invariant under 
	$$
	\{(s,s^2,\ldots, s^d)\colon s\in \Sigma_m\},
	$$ 
	and has $0$ as a non-isolated point.  Applying  Proposition~\ref{P:dense}  with $\Sigma_m$ instead of $\Sigma$, we get  that 
	there exists  $j\in \{1,\ldots, d\}$ such that  $\{te_j\colon t\in \T\}\subset A-x$, completing the proof. 
\end{proof}
\subsection{Proof of  Proposition~\ref{P:densehigher}}
	Since $A$ is an  infinite subset of $\T^d$, the set $A'$ of non-isolated points of $A$ is non-empty. It is also  closed and invariant under the set  $\Sigma^{1,\ldots, d}$ of Proposition~\ref{prop:periodic}. Applying  Proposition~\ref{prop:periodic} for $\ell_1:=1,\ldots, \ell_d:=d$,  we get that $A'$ contains a rational point with denominators   relatively prime  to $\Sigma$.  
	The result now follows from Corollary~\ref{C:line} and the fact that $A'\subset A$ (since $A$ is closed).

\end{document}